%\UseRawInputEncoding
\pdfoutput=1

\documentclass[showpacs,nofootinbib,showkeys,eqsecnum,prd,aps,preprint]{revtex4-2}

\usepackage[english]{babel}
\usepackage{comment}

\usepackage{amssymb}
\usepackage{amsmath}
\usepackage{amsfonts}
\usepackage{latexsym}
\usepackage{mathrsfs}
\usepackage{bm}
\usepackage{tensor}
\usepackage{hyperref}
\usepackage{graphicx}

\setcounter{MaxMatrixCols}{10}

\bibliographystyle{osajnl_my}

%\usepackage{oldlfont,amsfonts,amssymb,amsmath,latexsym,makeidx}
%\usepackage[mathscr]{eucal}
%\usepackage{graphics,graphicx}
%\graphicspath{{graphics/}}
%\usepackage{indentfirst,floatflt}
%\usepackage{epsfig,graphicx}
%\DeclareGraphicsRule{.bmp}{bmp}{}{}
%\usepackage{cite}
%\usepackage{verbatim}

%\usepackage{authblk}
%
%\usepackage{color}
%%\pagestyle{empty}
%\newcommand{\mytitle}[1]{\large \sc #1 \\}
%\newcommand{\avtor}[1]  {\large \it #1 \\}

%
%%
%\oddsidemargin   0cm
%\topmargin       0cm
%\textwidth      16.6cm
%\textheight     24cm

\def\const{\mathop{\rm const}\nolimits\,}
\def\dac{\displaystyle\frac}

\usepackage{amsthm}
\newtheorem{theorem}{Theorem}[section]
\newtheorem{lemma}[theorem]{Lemma}
\newtheorem{corollary}[theorem]{Corollary}

\usepackage{diagbox}

\def\const{\mathop{\rm const}\nolimits\,}

\def\dac{\displaystyle\frac}

\def\{{\lbrace}
\def\}{\rbrace}

%%%%
%\sloppy

\begin{document}

\title{Analytical description of the diffusion in a cellular automaton with the Margolus neighbourhood in terms of the two-dimensional Markov chain}

\author{Anton E. Kulagin}
\email{aek8@tpu.ru}
\affiliation{Division for Electronic Engineering, Tomsk Polytechnic University, 30 Lenina av., 634050 Tomsk, Russia}
%\affiliation{Laboratory of Quantum Electronics, V.E. Zuev Institute of Atmospheric Optics, SB RAS, 1 Academician Zuev Sq., 634055 Tomsk, Russia}

\author{Alexander V. Shapovalov}
\email{shpv@phys.tsu.ru}
\affiliation{Department of Theoretical Physics, Tomsk State University, Novosobornaya Sq. 1, 634050 Tomsk, Russia}
\affiliation{Laboratory for Theoretical Cosmology, International Centre of Gravity and Cosmos, Tomsk State University of Control Systems and Radioelectronics, 40 Lenina av., 634050 Tomsk, Russia}

\begin{abstract}
The one-parameter two-dimensional cellular automaton with the Margolus neighbourhood is analyzed based on the considering the projection of the stochastic movements of a single particle. Introducing the auxiliary random variable associated with the direction of the movement, we reduce the problem under consideration to the study of a two-dimensional Markov chain. The master equation for the probability distribution is derived and solved exactly using the probability generating function method. The probability distribution is expressed analytically in terms of Jacobi polynomials. The moments of the obtained solution allowed us to derive the exact analytical formula for the parametric dependence of the diffusion coefficient in the two-dimensional cellular automaton with the Margolus neighbourhood. Our analytic results agree with earlier empirical results of other authors and refine them. The results are of interest for the modelling two-dimensional diffusion using cellular automata especially for the multicomponent problem.\\
\end{abstract}

%\pacs{52.20.Hv, 52.25.Gg, 52.30.-q}

\keywords{two-dimensional Markov chain; cellular automata; Margolus neighbourhood; diffusion; probability distribution.  \\ Mathematics Subject Classification 2020: 37B15, 60J10, 60J20, 60J60}

\maketitle

\section{Introduction}
\label{sec:int}

Cellular automata (CA) are the powerful tool for modelling the matter transport \cite{wolfram2002}. The attractive feature of CA is that contemporary algorithms of parallel computing can be naturally embedded in them. Various cellular algorithms were successfully implemented on field-programmable gate arrays (FPGA) \cite{Palchaudhuri2022,Cicuttin2021} and on graphic cards \cite{Cagigas2022,Matolygin2020}.

In the study of diffusion phenomena in physics, chemistry, and biology, CA are positioned as one of the effective tools for modelling systems of many particles. The simplest CA with boolean alphabet modelling the diffusion are the asynchronous CA with naive diffusion and the synchronous CA with Margolus neighbourhood \cite{margolus87} or briefly Margolus CA (MCA). The last one is the object of our study. Exactly this type of the CA attracted our attention since synchronous CA are more advantageous in terms of the computing speed (see, e.g., comparison in \cite{Kireeva2019345}). Note that the MCA is the representative of partitioning CA.

The most common macroscopic characteristics of the diffusion transfer is the diffusion coefficient ${\mathcal{D}}_c$ that is defined as the proportionality factor in Fick`s law \cite{fick95,paul14}. The diffusion coefficient is usually invariant when the movements of particles are caused just by random fluctuations rather than by any fields. Nevertheless, the processes with the time dependent diffusion coefficient are also of interest, e.g., in the ambipolar diffusion phenomena \cite{shapkul21,shapkul22,editorial22}. The most popular model of the diffusion is the kinetic model, which is based on a differential equations that can be solved, for example, using difference schemes \cite{mickens2000,pankov21,shokri16}. In such approach, the diffusion coefficient is an explicit parameter of the model. For cellular automata, the diffusion coefficient is not presented in the model explicitly. Calculating the diffusion coefficient for the CA with naive diffusion is a trivial task since it directly models the random walk of particles that is well-studied. However, it is not trivial for partitioning CA including MCA. We show here that the diffusion in the two-dimensional MCA can be described in terms of one two-dimensional Markov chain as opposed to the one-dimensional random walk. In our paper, we study the exact solution of this two-dimensional Markov chain whose characteristics are crucial for the mathematical description of the diffusion in MCA.

While multidimensional Markov chains arise in a number of applications in service theory \cite{Rachinskaya2018}, genetic networks \cite{Ching2004}, Gibbs sampling \cite{morzfeld19}, and other areas \cite{Ching2008}, there are very few exactly solvable multidimensional Markov chains that do not degenerate to the one-dimensional one. Therefore, the exact solutions to the two-dimensional Markov chain under consideration can be of interest for specialists in applied mathematics and useful for applications other than cellular automaton theory.

The paper is organized as follows. In Section \ref{sec:mca}, we describe the diffusion model within the framework of the two-dimensional cellular automata with Margolus neighbourhood. Also, we explain the way of obtaining the diffusion coefficient for this model based on the considering one-dimensional movement of a single particle. In Section \ref{sec:single}, the stochastic process describing the one-dimensional movement of a single particle is given. The idea of introducing the auxiliary random variable associated with the direction of movement is proposed. This approach allows us to consider the process under consideration as the two-dimensional Markov chain. In Section \ref{sec:math1}, the formal mathematical definition of this chain and basic notations are given. In Section \ref{sec:math2}, we solve the master equation for the probability distribution of the particle coordinate using the probability generating function method. The moments of this distribution are derived. In Section \ref{sec:math3}, the obtained results are discussed within the framework of the diffusion model. The exact analytical formula for the diffusion coefficient is derived, and properties of the cellular automaton with the Margolus neighbourhood are discussed. In Section \ref{sec:concl}, we conclude with some remarks.

\section{Diffusion in the Margolus cellular automaton}
\label{sec:mca}

In the two-dimensional cellular automata with Margolus neighbourhood, the diffusion is described based on the following algorithm \cite{margolus87}:\\
1) The plane is divided into identical square cells forming a cell grid. One cell contains a boolean variable that takes the value 0 or 1 corresponding to the absence or presence of a {\it particle} in the cell respectively.\\
2) The cell grid is divided into blocks of $2\times 2$ cells. Such partition can be even or odd (see Fig. \ref{fig1}). The even partition corresponds to the case when the left lower cell of each block has even vertical and horizontal numbers. The odd partition corresponds to the case when the left lower cell of each block has odd vertical and horizontal numbers.\\
3) Beginning from an initial state, on each discrete time step the following {\it rule} is applied to each block independently. We rotate the block $90^\circ$ clockwise or counterclockwise with the probability of $
0<p\leq 0.5$ for each outcome. The {\it rule} is applied to blocks of the even partition on even time steps and it is applied to blocks of the odd partition on odd time steps. If each time step corresponds to the same time interval $\Delta t$ and $p=\const$, then such MCA simulates the diffusion with the time independent diffusion coefficient ${\mathcal{D}}_c$.

%\begin{figure}[h]
%\includegraphics[width=10.5 cm]{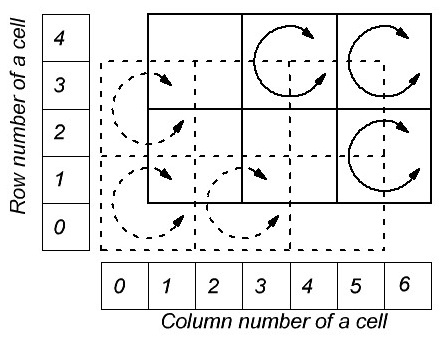}
%\caption{Odd and even partitions of the cell grid. Solid lines divide the cell grid into blocks of the odd partition and dashed lines divide the cell grid into block of the even partition. Arrows demonstrate the rotation {\it rule}. \label{fig1}}
%\end{figure}
%\unskip

The classic MCA uses $p=0.5$, i.e. blocks always rotate. In such approach, the diffusion rate is controlled with the length of the time interval corresponding to one discrete time step and with the size of a cell. However, this model can be applied to the multicomponent diffusion problem. In this case, the cell contains few bits that correspond to {\it particles} of different substances. It can be treated as few layers of the cell grid. Then, we can not directly apply the described {\it rule} with $p=0.5$ for every layer of {\it particles} if the diffusion rates for these substances are not equal. The difference in diffusion rates can be realized using different $p$ for different substances. The only drawback of such approach is that the diffusion coefficient ${\mathcal{D}}_c={\mathcal{D}}_c(p)$ nonlinearly depends on $p$. The other way is using the same $p=0.5$ for each layer but skip the two consecutive time steps for one or few layers with a probability equal to $p_s$ (the rotation {\it rule} is not applied to any blocks of the respective layer on the skipped time step). In this approach, the diffusion coefficient linearly depends on $p_s$. The differences between these two approaches will be discussed in Section \ref{sec:math3}.

%However, skipping time steps makes the time resolutions of such MCA $1/p_s$ times worse. Note that the decrease of $p$ also worsens the time resolutions but the dependence of MCA time resolutions on $p$ is not so trivial. Our estimates in Section ??? will show that this approach is preferable.

In order to model the diffusion using the MCA, the diffusion coefficient corresponding to the particular MCA must be determined. It can be shown (see, e.g., \cite{bandman1999}) that
\begin{equation}
{\mathcal{D}}_c(p)=k(p)\dac{\Delta x^2}{\Delta t},
\label{mca1}
\end{equation}
where ${\mathcal{D}}_c(p)$ is the diffusion coefficient, $\Delta x$ is the side length of the cell, $\Delta t$ is the time interval corresponding to the discrete time step, and $k(p)$ is the proportionality factor. However, relation \eqref{mca1} does not help to obtain the exact value of the multiplier $k$. Moreover, $k$ nonlinearly depends on $p$ as it was mentioned earlier. One way is to determine $k$ is to compute the specific diffusion problem using the numerical realization of the MCA and to compare the result with the analytical or numerical solutions of the respective diffusion equation. Such approach allows one to find $k$ approximately. However, it is hard to estimate error of obtained value. Moreover, this error depends on the choice of the specific diffusion problem. The other way is to obtain the distribution of the {\it particle} position in the MCA. Assuming that the correlation of these distributions for two {\it particles} tends to zero with the growth of time steps number, the distribution of the position of the single {\it particle} is equal to the Green function of the diffusion equation that is given by
\begin{equation}
G(\vec{r},\tau)\sim\exp\left(-\dac{\vec{r}\,^2}{4{\mathcal{D}}_c \tau}\right),
\label{mca2}
\end{equation}
where $\vec{r}$ is the radius-vector of the particle position at time $t=\tau$ relative to the initial position at time $t_0=0$, ${\mathcal{D}}_c$ is the diffusion coefficient. For the isotropic and homogeneous problem, one can consider the projection of the {\it particle} position on one axis (let it be $x$).
As $t\to \infty$, the distribution of the {\it particle} position tends to the normal distribution with the probability density function given by
\begin{equation}
P_{X_n}(x)\sim\exp\left(-\dac{(x-M_{X_n})^2}{2D_{X_n}}\right), \qquad t=n\Delta t, \qquad n\in {\mathbb{N}},
\label{mca3}
\end{equation}
where $X_n$ is the random position of the {\it particle} at the $n$-th time step, $M_{X_n}$ is the expectation of $X_n$, and $D_{X_n}$ is the dispersion of $X_n$. Then, for $\Delta x=1$, $\Delta t=1$, we have
\begin{equation}
\begin{gathered}
M_{X_n}\to 0, \qquad n\to \infty,\\
\dac{D_{X_n}}{2n}\to {\mathcal{D}}_c, \qquad n\to \infty.
\end{gathered}
\label{mca4}
\end{equation}
Hereinafter, ${\mathcal{D}}_c$ will be given for $\Delta x=1$, $\Delta t=1$ unless otherwise stated. The trick is to find the distribution $P_{X_n}(x)$ or its moments analytically. In \cite{malin98}, the authors tried to obtain the diffusion coefficient using this approach for $p=0.5$. However, they assumed that $X_n$ is the one-dimensional Markov chain, i.e. the simplest random walk. Later on, using the software realization of MCA with higher number of cells and time steps, it was shown that this assumption led to that the diffusion coefficient obtained in \cite{malin98} for 2D MCA is approximately three times higher than the real one (see, e.g., \cite{bandman1999,Bandman2012,Shalyapina2021}). However, the exact distribution of $X_n$ and the diffusion coefficient of MCA were not obtained since then. In this work, we present the exact distribution of $X_n$ in MCA and obtain the exact analytical formula of ${\mathcal{D}}_c(p)$ for an arbitrary $p$ including the special case $p=0.5$.

\section{One-dimensional movement of a single particle}
\label{sec:single}

In this Section, we will describe the rules of the stochastic movement of a single {\it particle} in MCA. As it was mentioned in the previous Section, we consider the movement along the $x$-axis that is directed along the side of a cell. Notice that if a {\it particle} is in an odd column at an odd time step, then its $x$-coordinate can either increase by 1 or do not change according to the rotation {\it rule} of the MCA (see Fig.~\ref{fig2}). The same rule works for a {\it particle} in a even column at an even time step. Let the variable $d$ show the direction of possible movement of the {\it particle} along the $x$-axis. If $d=1$, then $x$ can either increase by 1 or do not change. If $d=-1$, then $x$ can either decrease by 1 or do not change. It is easy to see that $d=-1$ corresponds to a {\it particle} in an even column at an odd time step and to a {\it particle} in an odd column at an even time step.

%\begin{figure}[h]
%\includegraphics[width=10.5 cm]{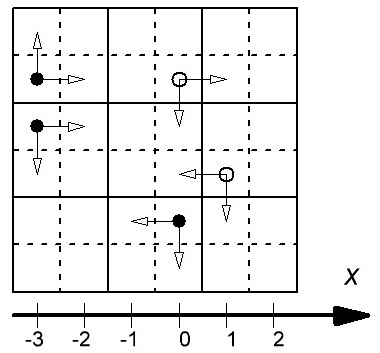}
%\caption{Possible movements of a particle in the MCA. The black dot is for the particle movement at an odd time step (the rotation {\it rule} applies for blocks of the odd partition) and the white dot is for the particle movement at an even time step (the rotation {\it rule} applies for block of the even partition). \label{fig3}}
%\end{figure}
%\unskip

Notice that the direction $d$ does not change when the $x$-coordinate changes and the direction $d$ reverts its sign when the coordinate $x$ does not change. The first outcome is realized with the probability of $p$ at every time step while the second outcome is realized with the probability $(1-p)$. Note that the sequence of the $x$-coordinates of the single {\it particle} has a memory, i.e. the current $x$-coordinate of the {\it particle} depends the history of its movements. It the main issue in the analytical description of partitioning cellular automata.

\section{Master equation}
\label{sec:math1}

Next, we give the formal definition of the two-dimensional Markov chain described in Section \ref{sec:single}.

Let $X_t:\Omega\to {\mathbb{Z}}$ and $\Delta_t:\Omega\to \{-1,+1\}$ be sequences of random variables where $t\in{\mathbb{Z}}_+$ and $\Omega$ is a set of outcomes. We associate the values of $X_t$ with the discrete $x$-coordinate of the {\it particle} on a discrete time step $t$, and $\Delta_t$ is associated with its direction. The probability of transitions for the chain under consideration, which were described in Section \ref{sec:single}, are given by
\begin{equation}
\begin{gathered}
{\rm Prob}(X_{t+1}=X_t+\Delta_t; \Delta_{t+1}=\Delta_t)=p, \\
{\rm Prob}(X_{t+1}=X_t; \Delta_{t+1}=-\Delta_t)=1-p.
\end{gathered}
\label{def1}
\end{equation}
The semicolon in \eqref{def1} implies the logical conjunction.
%The probabilities \eqref{def1} can be written as
%\begin{equation}
%\begin{gathered}
%{\rm Prob}(X_{t}-\Delta_t=X_{t+1}, \Delta_{t}=\Delta_{t+1})=p, \\
%{\rm Prob}(X_{t}=X_{t+1}, \Delta_{t}=-\Delta_{t+1})=1-p.
%\end{gathered}
%\label{def2}
%\end{equation}
Let us denote
\begin{equation}
P_t(x,d)={\rm Prob}(X_{t}=x;\Delta_t=d).
\label{def3}
\end{equation}
The transition rules \eqref{def1} in notations \eqref{def3} yield the following master equation:
\begin{equation}
P_{t+1}(x,d)=p P_t(x-d,d)+(1-p)P_t(x,-d).
\label{def4}
\end{equation}
Hereinafter, $x\in{\mathbb{Z}}$, $d\in\{-1,+1\}$, and $t\in{\mathbb{Z}}_+$. We consider the problem with the deterministic initial position of the {\it particle}, i.e. ${\rm Prob}(X_0=0)=1$. Then, the initial condition for \eqref{def4} reads
\begin{equation}
P_0(x,+1)=\varepsilon\cdot \delta_{x0}, \qquad P_0(0,-1)=(1-\varepsilon)\cdot \delta_{x0},
\label{def5}
\end{equation}
where $\delta_{ij}$ is the Kronecker delta.

Within the framework of the diffusion problem, it is natural to consider the symmetric case $P_0(0,+1)=P_0(0,-1)$. Thus, the following initial condition will be considered:
\begin{equation}
P_0(x,+1)=\dac{\delta_{x0}}{2}, \qquad P_0(x,-1)=\dac{\delta_{x0}}{2}.
\label{def6}
\end{equation}
Since the distribution over $x$ is of primary interest to us, we will study the marginal distribution given by
\begin{equation}
P_t(x)=P_t(x,+1)+P_t(x,-1).
\label{def7}
\end{equation}
In order to construct the analytical description of the two-dimensional Markov chain under consideration, we will solve the master equation \eqref{def4} with the initial condition \eqref{def6} in the next Section.

\section{Probability generating function}
\label{sec:math2}
The exact solution of \eqref{def4}, \eqref{def6} can be derived using the method of the probability generating function \cite{nelson1995} for the marginal distribution \eqref{def7}. Such probability generating function reads
\begin{equation}
G_t(z)=\sum_{x=-\infty}^{\infty}P_t(x)z^x.
\label{prgen1}
\end{equation}
Let us also introduce the following probability generating functions:
\begin{equation}
G_t^{+}(z)=\sum_{x=-\infty}^{\infty}P_t(x,+1)z^x, \qquad G_t^{-}(z)=\sum_{x=-\infty}^{\infty}P_t(x,-1)z^x.
\label{prgen2}
\end{equation}
In view of \eqref{def7}, one readily gets
\begin{equation}
G_t(z)=G_t^{+}(z)+G_t^{-}(z).
\label{prgen3}
\end{equation}
The master equation \eqref{def4} can be written as
\begin{equation}
\left\{\begin{array}{l}
P_{t+1}(x,+1)=p P_t(x-1,+1)+(1-p)P_t(x,-1),\cr
P_{t+1}(x,-1)=p P_t(x+1,-1)+(1-p)P_t(x,+1).
\end{array}\right.
\label{prgen4}
\end{equation}
Multiplying the system \eqref{prgen4} by $z^x$ and summing over $x\in{\mathbb{Z}}$, we derive the following equation for the probability generating functions \eqref{prgen2}:
\begin{equation}
\begin{gathered}
g_{t+1}(z)=b(z)g_t(z), \qquad g_0(z)=\dac{1}{2}\begin{pmatrix} 1 \\ 1 \end{pmatrix},\\
g_t(z)=\begin{pmatrix} G^{+}_t(z) \\ G^{-}_t(z) \end{pmatrix}, \qquad b(z)=\begin{pmatrix} pz & 1-p \\ 1-p & pz^{-1} \end{pmatrix},
\end{gathered}
\label{prgen5}
\end{equation}
where $g_0(z)$ is obtained from \eqref{def6}, \eqref{prgen2}.

The solution of the system \eqref{prgen5} has the following form:
\begin{equation}
g_t(z)=u(z)\begin{pmatrix}\lambda_1^t(z) & 0 \\ 0 & \lambda_2^t(z) \end{pmatrix}u^{-1}(z)g_0(z), \qquad u(z)u^{\top}(z)=u^{\top}(z)u(z)=1,
\label{prgen6}
\end{equation}
where the solution of the associated matrix eigenvalue problem yields
\begin{equation}
\begin{gathered}
\lambda_{1}(z)=\dac{p}{2}\left(z+\dac{1}{z}\right)+ \dac{m(z)}{2}, \qquad  \lambda_{2}(z)=\dac{p}{2}\left(z+\dac{1}{z}\right)- \dac{m(z)}{2}, \\
u(z)=\begin{pmatrix} c_1(z)(1-p)& c_2(z)(1-p) \\ \dac{c_1(z)}{2}\left(m(z)-r(z)\right) & -\dac{c_2(z)}{2}\left(m(z)+r(z)\right) \end{pmatrix}, \qquad m(z)=\sqrt{p^2\left(z+\dac{1}{z}\right)^2-8p+4},\\
r(z)=p\left(z-\dac{1}{z}\right), \qquad c_1(z)=\sqrt{\dac{m(z)+r(z)}{2(1-p)^2m(z)}}, \qquad c_2(z)=\sqrt{\dac{m(z)-r(z)}{2(1-p)^2m(z)}}.
\end{gathered}
\label{prgen7}
\end{equation}

The solution \eqref{prgen6}, \eqref{prgen7} can be rewritten as
\begin{equation}
\begin{gathered}
G^{+}_t(z)=\dac{1}{2m(z)}\left[-\lambda_1^t(z)\lambda_2(z)+\lambda_1(z)\lambda_2^t(z)+(1-p+pz)\left( \lambda_1^t(z)-\lambda_2^t(z) \right)\right],\\
G^{-}_t(z)=\dac{1}{2m(z)}\left[\lambda_1^{t+1}(z)-\lambda_2^{t+1}(z)+(1-p-pz)\left( \lambda_1^t(z)-\lambda_2^t(z) \right)\right].
\end{gathered}
\label{prgen8}
\end{equation}
Raw moments of $X_t$ for the given $\Delta_t$ read
\begin{equation}
\mu^{(n)\pm}_t=\sum_{x\in{\mathbb{Z}}}x^n P_t(x,\pm 1).
\label{prgen9}
\end{equation}
The moments \eqref{prgen9} can be obtained from
\begin{equation}
\mu^{(n)\pm}_t=\dac{d^n G_t^{\pm}(z)}{dz^n}\Big|_{z=1}.
\label{prgen10}
\end{equation}
Using \eqref{prgen10}, \eqref{prgen8}, one readily gets
\begin{equation}
\begin{gathered}
\mu^{(1)+}_t=-\mu^{(1)-}_t=\dac{p}{4-4p}\left(1-(2p-1)^t\right),\\
\mu^{(2)+}_t=\mu^{(2)-}_t=\dac{p^2}{4(1-p)^2}\left(-1+\dac{2(1-p)}{p}t+(2p-1)^t\right).
\end{gathered}
\label{prgen11}
\end{equation}
Then, the expectation $M_{X_t}$ and the dispersion $D_{X_t}$ of the marginal distribution \eqref{def7}, which are given by
\begin{equation}
M_{X_t}=\sum_{x\in{\mathbb{Z}}}x P_t(x), \qquad D_{X_t}=\sum_{x\in{\mathbb{Z}}}(x-M_{X_t})^2 P_t(x),
\label{prgen12}
\end{equation}
read
\begin{equation}
M_{X_t}=0, \qquad D_{X_t}=\dac{p^2}{2(1-p)^2}\left(-1+\dac{2(1-p)}{p}t+(2p-1)^t\right).
\label{prgen13}
\end{equation}
%For $p\in[0;1]$, we have
%\begin{equation}
%\lim_{t \to \infty}\dac{D_{X_t}}{t}=\dac{p}{1-p}.
%\label{prgen14}
%\end{equation}
While the formula \eqref{prgen8} is convenient for the derivation of moments, it does not allow one to directly obtain the distribution \eqref{def7}. The explicit representation of the probability generating function as the power series in $z$ is a bit tricky. The analytical form for the marginal distribution \eqref{def7} is given by the following theorem
\begin{theorem}
The marginal distribution $P_t(x)$ reads
%\begin{equation}
%\begin{gathered}
%P_{2n+1}(2j)=(1-p)p^{2j}(1-2p)^{n-j}P_{n-j}^{(2j,0)}\!\left[\dac{2p^2}{1-2p}+1\right], \qquad j=\overline{0,n};\\
%P_{2n}(2j)=(1-p)^{2} p^{2j}(1-2p)^{n-j-1}\dac{n}{n-j}P_{n-j-1}^{(2j,1)}\!\left[\dac{2p^2}{1-2p}+1\right], \qquad j=\overline{0,n-1};\\
%P_{2n}(2j+1)=(1-p)p^{2j+1}(1-2p)^{n-j-1}P_{n-j-1}^{(2j+1,0)}\!\left[\dac{2p^2}{1-2p}+1\right], \qquad j=\overline{0,n-1};\\
%P_{2n+1}(2j+1)=(1-p)^2p^{2j+1}(1-2p)^{n-j-1}\dac{n+\frac{1}{2}}{n-j}P_{n-j-1}^{(2j+1,1)}\!\left[\dac{2p^2}{1-2p}+1\right],\qquad j=\overline{0,n-1};\\
%P_{n}(n)=\dac{1}{2}p^n;\\
%P_{t}(-x)=P_t(x), \qquad n\in {\mathbb{N}}.
%\end{gathered}
%\label{teo1}
%\end{equation}
\begin{align}
\label{teo1}
&P_{2n+1}(2j)=(1-p)p^{2j}(1-2p)^{n-j}P_{n-j}^{(2j,0)}\!\left[\dac{2p^2}{1-2p}+1\right], \qquad j=\overline{0,n};\\
\label{teo2}
&P_{2n}(2j)=(1-p)^{2} p^{2j}(1-2p)^{n-j-1}\dac{n}{n-j}P_{n-j-1}^{(2j,1)}\!\left[\dac{2p^2}{1-2p}+1\right], \qquad j=\overline{0,n-1};\\
\label{teo3}
&P_{2n}(2j+1)=(1-p)p^{2j+1}(1-2p)^{n-j-1}P_{n-j-1}^{(2j+1,0)}\!\left[\dac{2p^2}{1-2p}+1\right], \qquad j=\overline{0,n-1};\\
\label{teo4}
&P_{2n+1}(2j+1)=(1-p)^2p^{2j+1}(1-2p)^{n-j-1}\dac{n+\frac{1}{2}}{n-j}P_{n-j-1}^{(2j+1,1)}\!\left[\dac{2p^2}{1-2p}+1\right],\\
\nonumber
& j=\overline{0,n-1};\\
\label{teo5}
&P_{n}(n)=\dac{1}{2}p^n;\\
\nonumber
&P_{t}(-x)=P_t(x), \qquad n\in {\mathbb{N}}.
\end{align}
where $P_n^{\alpha,\beta}[x]$ are the Jacobi polynomials, and $C^k_n=\dac{n!}{k!(n-k)!}$ are the binomial coefficients.

For $p=\dac{1}{2}$, the formulae \eqref{teo1}, \eqref{teo2}, \eqref{teo3}, \eqref{teo4} yield
\begin{equation}
\begin{gathered}
P_{2n+1}(2j)=2^{-2n}C^{j+n}_{2n}, \qquad j=\overline{0,n};\\
P_{2n}(2j)=2^{-2n}C^{j+n}_{2n}, \qquad j=\overline{0,n-1}\\
P_{2n}(2j+1)=2^{-2n+1}C^{j+n}_{2n-1}, \qquad j=\overline{0,n-1}\\
P_{2n+1}(2j+1)=2^{-2n-1}C^{j+n+1}_{2n+1}, \qquad j=\overline{0,n-1}.
\end{gathered}
\label{teo6}
\end{equation}
\label{teor1}
\end{theorem}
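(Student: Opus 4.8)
The plan is to extract the coefficients of the probability generating functions \eqref{prgen8} directly as power series in $z$. The starting point is the observation that $\lambda_1(z)$ and $\lambda_2(z)$ satisfy the quadratic $\lambda^2 - p(z+z^{-1})\lambda + p^2 - (1-p)^2 z \cdot z^{-1}\cdots$; more precisely, from \eqref{prgen7} one has $\lambda_1+\lambda_2 = p(z+z^{-1})$ and $\lambda_1\lambda_2 = \tfrac14\big(p^2(z+z^{-1})^2 - m(z)^2\big) = 2p-1$. The crucial simplification is that the product $\lambda_1\lambda_2 = 2p-1$ is \emph{independent of $z$}. This means the combinations appearing in \eqref{prgen8}, namely $\lambda_1^t-\lambda_2^t$ and $\lambda_1^{t+1}-\lambda_2^{t+1}$ and $\lambda_1^t\lambda_2-\lambda_1\lambda_2^t = (2p-1)(\lambda_1^{t-1}-\lambda_2^{t-1})$, are all expressible through the single sequence $S_t(z) := \dfrac{\lambda_1^t - \lambda_2^t}{\lambda_1 - \lambda_2} = \dfrac{\lambda_1^t-\lambda_2^t}{m(z)}$, which are Chebyshev-like polynomials in the variable $\lambda_1+\lambda_2 = p(z+z^{-1})$ with parameter $\lambda_1\lambda_2 = 2p-1$.

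\medskip

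\textbf{Key steps, in order.} First I would rewrite \eqref{prgen8} entirely in terms of $S_t(z)$ and $S_{t+1}(z)$, eliminating $m(z)$; since $G_t(z) = G_t^+(z)+G_t^-(z)$, the $z$-odd pieces proportional to $pz - (1-p)$ etc. should partially cancel, leaving $G_t(z)$ as a clean combination $\alpha S_{t+1}(z) + \beta S_t(z)$ with $\alpha,\beta$ simple polynomials in $p$ and $z$. Second, I would use the explicit hypergeometric/binomial expansion of the Chebyshev polynomial: $S_t(z) = \sum_{k} (-\lambda_1\lambda_2)^k \binom{t-1-k}{k}(\lambda_1+\lambda_2)^{t-1-2k}$, i.e.
\begin{equation}
S_t(z) = \sum_{k\ge 0}(1-2p)^k\binom{t-1-k}{k}p^{t-1-2k}\left(z+\dac{1}{z}\right)^{t-1-2k}.
\nonumber
\end{equation}
Third, expand $\left(z+z^{-1}\right)^{t-1-2k}$ by the binomial theorem to read off the coefficient of $z^x$; this produces a double sum over $k$ and the binomial index. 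Fourth, collapse the double sum: fixing the power $x = 2j$ (even $t$ parity) or $x=2j+1$ and summing over $k$, the inner sum is a ${}_2F_1$ evaluated at a point determined by $p$, which by the standard series representation of the Jacobi polynomial $P_n^{(\alpha,0)}$ or $P_n^{(\alpha,1)}$ equals exactly the Jacobi polynomial appearing on the right-hand sides \eqref{teo1}--\eqref{teo4}, evaluated at $\tfrac{2p^2}{1-2p}+1$. The endpoint term $P_n(n) = \tfrac12 p^n$ of \eqref{teo5} comes from the top term $k=0$, $x = t$, where only the leading monomial $\lambda_1^{t-1}$ survives in $S_t$ and $G_t^+$ contributes $\tfrac12 p^t$ at $z^t$. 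Finally, the $p=\tfrac12$ specialization \eqref{teo6}: at $p=\tfrac12$ one has $1-2p=0$, so only the $k=0$ term of $S_t(z)$ survives, giving $S_t(z) = (z+z^{-1})^{t-1}/2^{t-1}$, whose coefficients are directly central-binomial-type numbers; alternatively one takes the limit in the Jacobi-polynomial formulas using $P_n^{(\alpha,\beta)}(1) = \binom{n+\alpha}{n}$ after the argument $\tfrac{2p^2}{1-2p}+1 \to \infty$ is handled via the leading-coefficient asymptotics of $P_n^{(\alpha,\beta)}$.

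\medskip

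\textbf{The main obstacle} I expect is the fourth step: correctly matching the collapsed double sum to the Jacobi polynomial, including getting the prefactors $(1-p)$ vs $(1-p)^2$, the powers $p^{2j}$ vs $p^{2j+1}$, the arguments $(2j,0)$ vs $(2j,1)$, and the rational factors $\tfrac{n}{n-j}$ and $\tfrac{n+1/2}{n-j}$ exactly right. This requires care with the four parity cases ($t$ even/odd $\times$ $x$ even/odd), since each case routes through a slightly different combination of $S_{t+1}$ and $S_t$ and hence a Jacobi polynomial with a different second parameter $\beta\in\{0,1\}$. A clean way to organize this is to introduce the substitution $w = \tfrac{2p^2}{1-2p}+1$ early, note $\tfrac{w-1}{2} = \tfrac{p^2}{1-2p}$, and verify the identity case-by-case against the hypergeometric form ${}_2F_1(-n, n+\alpha+\beta+1; \alpha+1; \tfrac{1-w}{2})$ of $P_n^{(\alpha,\beta)}(w)$; the symmetry relation $P_t(-x) = P_t(x)$ is immediate from the invariance of $G_t(z)$ under $z\mapsto z^{-1}$, which one checks from \eqref{prgen8} using $\lambda_i(z^{-1}) = \lambda_i(z)$ and $m(z^{-1}) = m(z)$. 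I would also, as a sanity check before embarking on the full expansion, verify that the moments computed from \eqref{teo1}--\eqref{teo5} reproduce \eqref{prgen11} and \eqref{prgen13}, and that $\sum_x P_t(x) = 1$, which follows from $G_t(1) = 1$.
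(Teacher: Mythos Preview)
Your plan is correct and follows essentially the same route as the paper's proof: rewrite $G_t(z)$ in terms of the sequence $S_t(z)=(\lambda_1^t-\lambda_2^t)/m(z)$, expand in powers of $z+z^{-1}$, extract the coefficient of $z^x$, and identify the resulting single sum as a terminating ${}_2F_1$, hence a Jacobi polynomial via the standard representation $P_n^{(\alpha,\beta)}(w)=\frac{(\alpha+1)^{\overline{n}}}{n!}\,{}_2F_1(-n,n+\alpha+\beta+1;\alpha+1;\tfrac{1-w}{2})$. The four parity cases and the contiguous relation for $P_n^{(\alpha,0)}+P_{n+1}^{(\alpha,0)}$ that produces the $\beta=1$ Jacobi polynomials with the factors $\tfrac{n}{n-j}$ and $\tfrac{n+1/2}{n-j}$ are handled exactly as you anticipate.

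The one genuine difference is a shortcut in your favour. The paper expands $(q\pm m)^t$ by the binomial theorem, substitutes $m^2=q^2-\lambda$ with $\lambda=4(2p-1)$, and obtains coefficients of the form $Q(n,k)=\sum_j C^{2j}_{2n+1}C^k_{n-j}$ and $R(n,k)=\sum_j C^{2j+1}_{2n+2}C^k_{n-j}$; it then needs a separate Lemma, proved via the Gauss summation theorem for ${}_2F_1$ at $z=1$, to collapse these to $2^{2(n-k)}C^k_{2n-k}$ and $2^{2(n-k)+1}C^k_{2n-k+1}$. Your direct use of the Chebyshev/Girard--Waring expansion $S_t=\sum_k(1-2p)^k\binom{t-1-k}{k}\big(p(z+z^{-1})\big)^{t-1-2k}$ is precisely the closed form of that lemma, so you bypass it entirely and land immediately on the single sum that the paper reaches only after the Gauss step (your summand $\binom{2n-k}{k}\binom{2n-2k}{n-k-j}$ equals the paper's $\frac{(n+k+j)!}{k!(2j+k)!(n-j-k)!}$ after the reindexing $k\mapsto n-j-k$). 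For $p=\tfrac12$ your first route (only $k=0$ survives) is the cleaner one and matches the paper's direct simplification of $G_t(z)$; the ``argument $\to\infty$'' limit would work too but is unnecessary.
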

The proof of this theorem is given in Appendix \ref{sec:app}.

\section{Discussion of the results}
\label{sec:math3}
Now, from \eqref{mca4} and \eqref{prgen13}, we can obtain the diffusion coefficient ${\mathcal{D}}_c(p)$:
\begin{equation}
{\mathcal{D}}_c(p)=\lim_{t\to \infty} \dac{D_{X_t}}{2t}=\dac{1}{2}\dac{p}{1-p}.
\label{disc1}
\end{equation}
In Fig. \ref{fig3}, \ref{fig4}, \ref{fig5}, the probability distribution from \eqref{teo1}, \eqref{teo2}, \eqref{teo3}, \eqref{teo4} (or \eqref{teo6} for $p=\frac{1}{2}$), and \eqref{teo5} is plotted along with the following normal probability distribution:
\begin{equation}
f_t(x)=\sqrt{\dac{1-p}{2t\pi p}}\exp\left(-\dac{(1-p)x^2}{2tp}\right).
\label{disc2}
\end{equation}
Note that in view of {\eqref{mca3}}, {\eqref{mca4}} the distribution $P_t(x)$ tends to $f_t(x)$ as $t\to \infty$ at least for $p\in[0;\frac{1}{2}]$. The moments of the normal distribution {\eqref{disc2}} are given by {\eqref{disc1}}, {\eqref{prgen13}}.

%\begin{figure}[h]
%\begin{minipage}[b][][b]{0.9\linewidth}\centering
%    \includegraphics[width=12.5 cm]{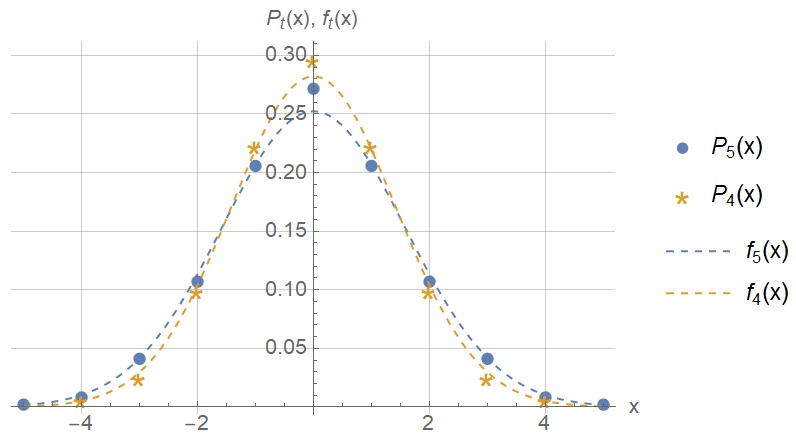} \\ a)
%  \end{minipage}\\
%  \begin{minipage}[b][][b]{0.9\linewidth}\centering
%    \includegraphics[width=12.5 cm]{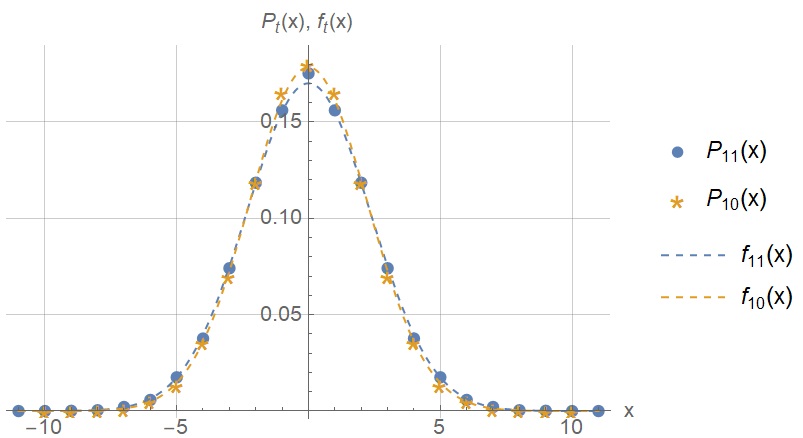} \\ b)
%  \end{minipage}
%\caption{The plot of the probability distribution $P_{t}(x)$ along with the plot of the normal probability distribution function $f_t(x)$ for $p=\dac{1}{3}$.\label{fig3}}
%\end{figure}
%
%\begin{figure}[h]
%\begin{minipage}[b][][b]{0.9\linewidth}\centering
%    \includegraphics[width=12.5 cm]{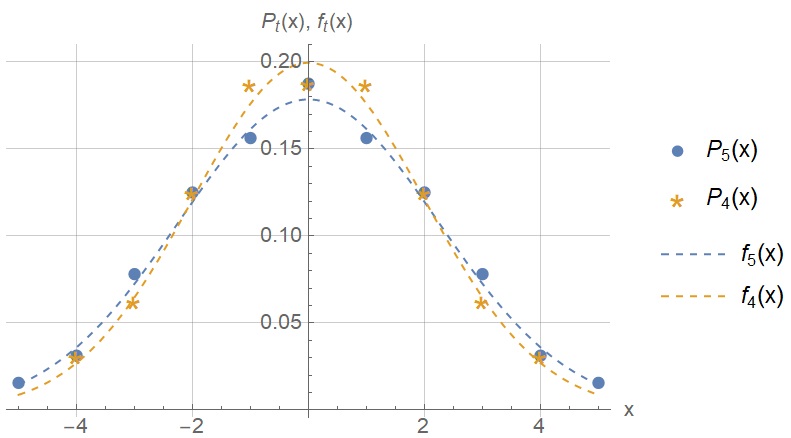} \\ a)
%  \end{minipage}\\
%  \begin{minipage}[b][][b]{0.9\linewidth}\centering
%    \includegraphics[width=12.5 cm]{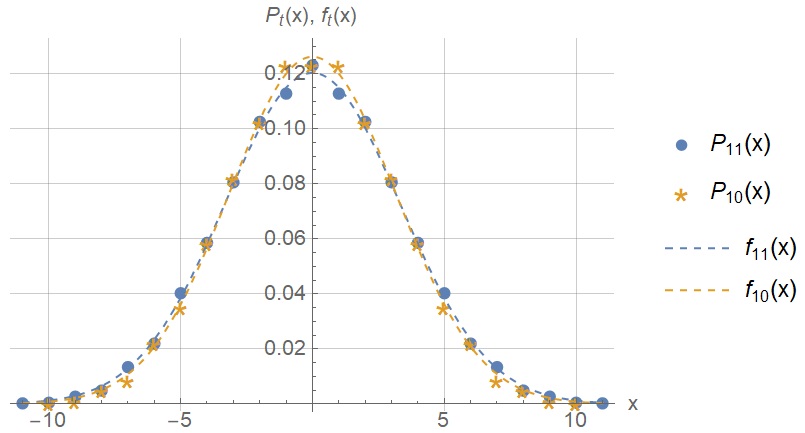} \\ b)
%  \end{minipage}
%\caption{The plot of the probability distribution $P_{t}(x)$ along with the plot of the normal probability distribution function $f_t(x)$ for $p=\dac{1}{2}$.\label{fig4}}
%\end{figure}
%
%\begin{figure}[h]
%\begin{minipage}[b][][b]{0.9\linewidth}\centering
%    \includegraphics[width=12.5 cm]{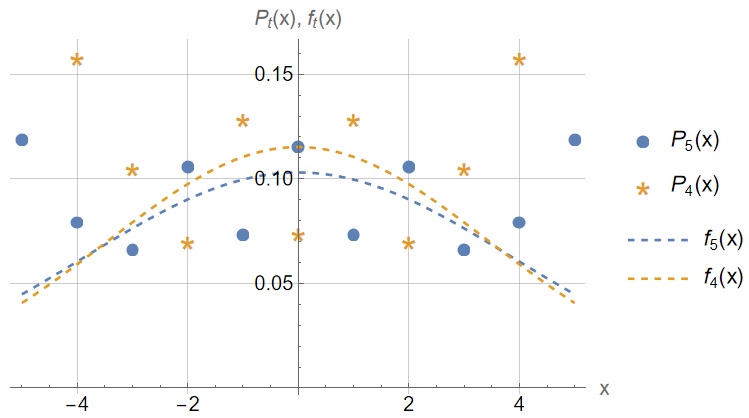} \\ a)
%  \end{minipage}\\
%  \begin{minipage}[b][][b]{0.9\linewidth}\centering
%    \includegraphics[width=12.5 cm]{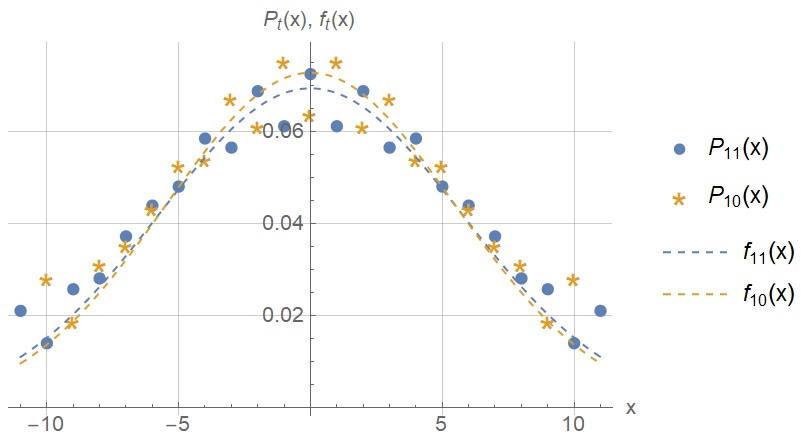} \\ b)
%  \end{minipage}
%\caption{The plot of the probability distribution $P_{t}(x)$ along with the plot of the normal probability distribution function $f_t(x)$ for $p=\dac{3}{4}$.\label{fig5}}
%\end{figure}

From Fig. \ref{fig3}, \ref{fig4}, it is clear that the distribution is very close the normal one even for small $t$.

Note that the probability $p>\dac{1}{2}$ does not make sense for MCA. However, the definition \eqref{def1} of the two-dimensional Markov chain under consideration admits $p\in(0;1)$ ($p=0$ and $p=1$ are trivial cases), and Theorem \ref{teor1} holds for $p\in(0;1)$. For completeness, we have shown the probability distribution for $p>\dac{1}{2}$ in Fig. 5. Apart from the fact that $p>\dac{1}{2}$ has no physical meaning regarding the diffusion, we can note that the probability distribution $P_t(x)$ is a nonmonotonic function with respect to $x\geq 0$ for $p>\dac{1}{2}$.

Next, we will discuss the differences between the two types of MCA mentioned in Section \ref{sec:mca}. Let the MCA parameterized by $p$ be termed the first type MCA. The second type MCA corresponds to the $p=\dac{1}{2}$ but it is parameterized by $p_s$. The parameter $p_s$ determines the probability that the rotation {\it rule} does not applies to any block of the odd and even partition at the next two time steps. In Section \ref{sec:mca}, we described it as the probability of skipping two consecutive time steps. We will limit ourselves to the simple case when the probability $p_s$ is checked only at odd time steps. Note that $p_s$ corresponds to the global rule while the probability $p$ corresponds to the local {\it rule} since it is checked for each individual block independently.

We have shown that the diffusion coefficient ${\mathcal{D}}_c(p)$ \eqref{disc1} in the first type MCA nonlinearly depends on $p$. On the other hand, the diffusion coefficient $\tilde{{\mathcal{D}}_c}(p_s)$ in the second type MCA linearly depends on $p_s$ and is given as follows:
\begin{equation}
\tilde{{\mathcal{D}}_c}(p_s)=(1-p_s){\mathcal{D}}_c(0.5).
\label{disc3}
\end{equation}
Both of these MCA can be adjusted to the same diffusion coefficient by the appropriate choice of $p$ and $p_s$, i.e. they have the same macroscopic behaviour at large times. However, they behave differently at small times $t$. Since the first and the third moments of the distribution given by \eqref{teo1}, \eqref{teo2}, \eqref{teo3}, \eqref{teo4}, \eqref{teo5} equal to zero, the core information about the process behaviour is carried by the dispersion. The behaviour of the first type MCA corresponds to the dispersion $D_{X_t}(p)$ given by \eqref{prgen13} while the behaviour of the second type MCA corresponds to the dispersion $\tilde{D}_{X_t}(p_s)$ given by
\begin{equation}
\tilde{D}_{X_t}(p_s)=(1-p_s) \cdot D_{X_t}(0.5).
\label{disc3a}
\end{equation}
In Fig. \ref{fig6}, the time dependence of $D_{X_t}(p)$ and $\tilde{D}_{X_t}(p_s)$ are shown.

%\begin{figure}[h]
%\begin{minipage}[b][][b]{1.0\linewidth}\centering
%    \includegraphics[width=12.5 cm]{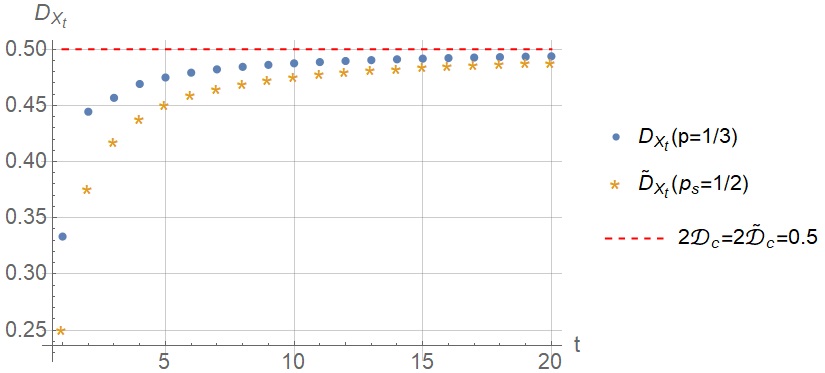} \\ a)
%  \end{minipage}\\
%  \begin{minipage}[b][][b]{1.0\linewidth}\centering
%    \includegraphics[width=12.5 cm]{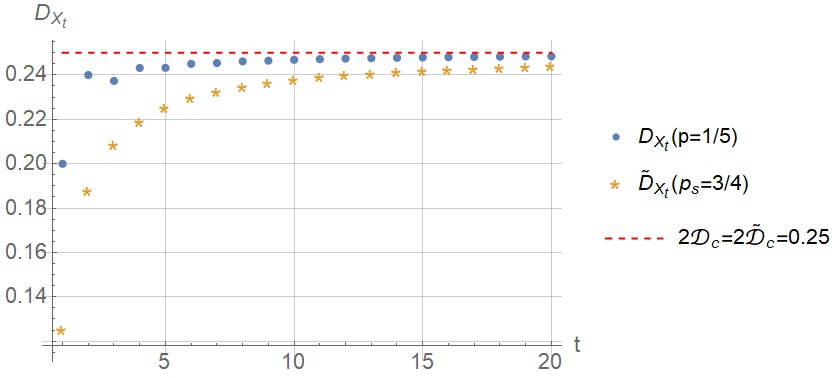} \\ b)
%  \end{minipage}
%\caption{The time plot of the dispersion of the $X_t$ for two types of the MCA. It illustrates the macroscopic behaviour of these MCA at small times. The parameters in (a) correspond to MCA with the diffusion coefficient of $0.25$, and the parameters in (b) correspond to MCA with the diffusion coefficient of $0.125$. \label{fig6}}
%\end{figure}

We see that the dispersion $D_{X_t}(p)$ tends to its large time asymptotic value faster than $\tilde{D}_{X_t}(p_s)$. Moreover, for small $t$, it is also much closer to this value that determines the diffusion coefficient in MCA especially for $p<\dac{1}{2}$. It means that the first type MCA shows its macroscopic diffusion behaviour at smaller $t$. That can be treated as a better time resolution of such MCA. Note that the comparison of the dispersions yields just an lower estimate for the time resolution since we do not take into account correlation between the movement of different particles in MCA. However, there is a reason to believe that this correlation is even higher for the second type MCA with $p_s\neq 1$ since the skipping of the two consecutive time steps is performed for the whole cellular grid as opposite to the first type MCA, where it is checked for the every block whether it rotates or not on the current time step. Hence, the difference mentioned above may be even greater than one obtained from our estimates.

Let us compare our analytical results with some empirical results of other authors.

In \cite{bandman1999}, the author has compared the results of modelling with MCA and solutions of PDE numerically. The diffusion coefficient ${\mathcal{D}}_c(0.5)=0.512$ was obtained. The error of 5\% was declared in \cite{bandman1999}. Their numerical results agrees with the exact value of $0.5$ obtained in this work.

%In [Губарев 2013, на русском], the authors declared ${\mathcal{D}}_c(0.5)=\dac{4}{9}$ for empirical reasons. Но никаких обоснований приведено не было.

In \cite{Shalyapina2021}, the authors have empirically obtained the dependence of ${\mathcal{D}}_c(p)$ (in relative units) on $p$. They constructed the following regression model for this dependence:
\begin{equation}
p=-0.35\left(\dac{{\mathcal{D}}_c(p)}{{\mathcal{D}}_c(0.5)}\right)^2+0.86\dac{{\mathcal{D}}_c(p)}{{\mathcal{D}}_c(0.5)}.
\label{dep1}
\end{equation}
Their experimental values for \eqref{dep1} fit well to the curve \eqref{disc1} taking into account the inaccuracy of their method of matching the solutions of MCA and PDE. Hence, they could obtain the regression model that is close to the exact formula if they would use the rational function approximation, which is widely used in many applications \cite{gluzman2020}, instead of the polynomial model.

Note that along with the MCA with the Boolean alphabet the one with the integer alphabet is also used \cite{Medvedev2010,Kireeva2019345}. Such approach allows one to lower the concentration noise \cite{Bandman2005} when modelling reaction-diffusion systems. In order to control the diffusion coefficient independently of reaction rates in such MCA, the rotation {\it rule} applies not to the whole integer value but to its percentage. Applying the rotation {\it rule} to the percentage of {\it particles} is equivalent to the changing the block rotation probability $p$ by the same factor in terms of the diffusion coefficient. Thus, the dependence of the diffusion coefficient on the percentage $\xi$ of {\it particles} that are involved in the rotation has the same form as the dependence \eqref{disc1}  up to changing $\xi=2p$.

We should note that, in some works related to the multicomponent MCA, reseachers falsely suppose that the dependence ${\mathcal{D}}_c(p)$ should be linear for a wide range of $p$. This assumption originates from the work \cite{chopard94}, where the authors claimed that ${\mathcal{D}}_c(p)$ is linear on $p$ even when the rotation probability is independent for each block. Actually, they considered the modified version of the MCA that is neither the MCA with boolean alphabet, nor the multicomponent MCA. Our exact analytical formula {\eqref{disc1}} shows that the linear approximation is inaccurate when $p$ is substantially lower than $\frac{1}{2}$. Note that it also agrees with the cited numerical results \cite{Shalyapina2021}.

%Отметим, что наряду с МКА с булевым алфавитом используются МКА и с целочисленным алфавитом \cite{Medvedev2010}. Такой подход позволяет получить меньше случайного целочисленного шума при моделировании реакционно-диффузионных систем. Для контроля коэффициента диффузии независимо от скоростей реакций в таких МКА при повороте блока правило применяется не ко всему целочисленному значению, а лишь к какой-то его части. Задание относительной доли "частиц"\,, к которым применяется правило поворота, эквивалентно изменению во столько же раз вероятности поворота блока $p$ с точки зрения коэффициента диффузии. Поэтому зависимость коэффициента диффузии от доли частиц $\xi$, к которым применяется правило поворота, имеет такой же вид, как зависимость XXX при замене $\xi=2p$.

\section{Conclusion}
\label{sec:concl}

We have considered the one-parameter stochastic process describing the one-dimensional movement (projection on the $x$-axis) of a single particle in two-dimensional MCA. In order to find the distribution of the $x$-coordinate of the particle at every discrete time step (the probability distribution of the random variable $X_t$), we have introduced the additional random variable $\Delta_t$ associated with the direction of the movement of the particle. Using our approach, we have reduced this process to the two-dimensional Markov chain of $X_t$ and $\Delta_t$ and have obtained the desired distribution by the probability generating function method. In particular, we have derived the analytic formula for the probability distribution (Theorem \ref{teor1}) and for two first moments \eqref{prgen13} of $X_t$. The probability distribution is expressed in compact form in terms of Jacobi polynomials.

The primary results of this work are the Theorem {\ref{teor1}} and formula {\eqref{disc1}}. The formula {\eqref{disc1}} gives the diffusion coefficient of the one-parameter two-dimensional MCA. It establishes the correspondence between the cellular automaton model of the diffusion and the kinetic theory. Note that in our formalist diffusion coefficient is given by the asymptotic behaviour of the dispersion of $X_t$ for the distribution under consideration. It is shown that the formula \eqref{disc1} agrees with earlier empirical results of other authors in the specified cases. Thus, the formula \eqref{disc1} is a fundamental mathematical result for modelling the diffusion using MCA. Theorem {\ref{teor1}} gives the probability distribution of a single particle in MCA that is given by the exact solutions to the two-dimensional Markov chain {\eqref{def4}}, {\eqref{def5}}. While {\eqref{disc1}} is of main applied interest, Theorem {\ref{teor1}} is a more general fundamental result. It allows us to study specific features of various realizations of MCA discussed in Section 6. Moreover, the exactly solvable two-dimensional Markov chain are also valuable for the fundamental mathematics. In view of Theorem {\ref{teor1}}, the empirically proven fact that MCA describes the diffusion can be presented as a specific property of Jacobi polynomials (see Fig. {\ref{fig3}}, {\ref{fig4}}).

Due to the potential interest for other applications, it is worth to study other properties of the process under consideration. Note that the formulae derived in this work are valid for the range $p\in (0;1)$ while only $p\in (0;\frac{1}{2}]$ have a physical meaning in MCA. Therefore, the more detailed study of this process for the parameter $p\in (\frac{1}{2};1)$ is planned in a separate future work. Also, it is of interest to generalize our approach for the three-dimensional MCA, especially the formula {\eqref{disc1}}.

\section*{Acknowledgement}
We are thankful to M.L. Gromov and N.A. Shalyapina for stimulating discussions.

\bibliography{lit1}

\begin{thebibliography}{10}
\newcommand{\enquote}[1]{``#1''}
\expandafter\ifx\csname url\endcsname\relax
  \def\url#1{\texttt{#1}}\fi
\expandafter\ifx\csname urlprefix\endcsname\relax\def\urlprefix{URL }\fi
\providecommand{\eprint}[2][]{\url{#2}}

\bibitem{wolfram2002}
S.~Wolfram, \emph{A new kind of science} (Wolfram media, 2002).

\bibitem{Palchaudhuri2022}
A.~Palchaudhuri, D.~Anand, and A.~Dhar, \enquote{FPGA fabric conscious
  architecture design and automation of speed-area efficient Margolus
  neighborhood based cellular automata with variegated scan path insertion,}
  Journal of Parallel and Distributed Computing \textbf{167}, 50--63 (2022).

\bibitem{Cicuttin2021}
A.~Cicuttin, L.~De~Micco, M.~Crespo, M.~Antonelli, L.~Garcia, and W.~Florian,
  \enquote{Physical implementation of asynchronous cellular automata networks:
  mathematical models and preliminary experimental results,} Nonlinear Dynamics
  \textbf{105}(3), 2431--2452 (2021).

\bibitem{Cagigas2022}
D.~Cagigas-Muñiz, F.~Diaz-del Rio, J.~Sevillano-Ramos, and J.-L.
  Guisado-Lizar, \enquote{Efficient simulation execution of cellular automata
  on GPU,} Simulation Modelling Practice and Theory \textbf{118}, 102519
  (2022).

\bibitem{Matolygin2020}
A.~Matolygin, N.~Shalyapina, M.~Gromov, and S.~Torgaev, \enquote{Tensor
  approach to software implementation of cellular automata model of diffusion,}
  vol. 1680 (2020).

\bibitem{margolus87}
T.~Toffoli and N.~Margolus, \emph{Cellular Automata Machines: A New Environment
  for Modeling} (MIT Press, Cambridge, MA, USA, 1987).

\bibitem{Kireeva2019345}
A.~Kireeva, K.~K. Sabelfeld, and S.~Kireev, \enquote{Synchronous multi-particle
  cellular automaton model of diffusion with self-annihilation,} Lecture Notes
  in Computer Science (including subseries Lecture Notes in Artificial
  Intelligence and Lecture Notes in Bioinformatics) \textbf{11657 LNCS}, 345
  – 359 (2019).

\bibitem{fick95}
A.~Fick, \enquote{On liquid diffusion. J Membr Sci,} Journal of Membrane
  Science \textbf{100}(1), 33--38 (1995).

\bibitem{paul14}
A.~Paul, T.~Laurila, V.~Vuorinen, and S.~Divinski, \emph{Fick’s Laws of
  Diffusion}, pp. 115--139 (2014).

\bibitem{shapkul21}
A.~Shapovalov and A.~Kulagin, \enquote{Semiclassical approach to the nonlocal
  kinetic model of metal vapor active media,} Mathematics \textbf{9}(23), 2995
  (2021).

\bibitem{shapkul22}
A.~Shapovalov, A.~Kulagin, and S.~Siniukov, \enquote{Family of Asymptotic
  Solutions to the Two-Dimensional Kinetic Equation with a Nonlocal Cubic
  Nonlinearity,} Symmetry \textbf{14}(3), 577 (2022).

\bibitem{editorial22}
S.~Odintsov, \enquote{Editorial for Feature Papers 2021–2022,} Symmetry
  \textbf{15}, 32 (2022).

\bibitem{mickens2000}
R.~Mickens, \enquote{Nonstandard finite difference schemes for
  reaction‐‐diffusion equations having linear advection,} Numerical Methods
  for Partial Differential Equations \textbf{16}, 361–364 (2000).

\bibitem{pankov21}
P.~Pankov, Z.~Zheentaeva, and T.~Shirinov, \enquote{Asymptotic reduction of
  solution space dimension for dynamic systems,} TWMS Journal of Pure and
  Applied Mathematics \textbf{12}(2), 243–253 (2021).

\bibitem{shokri16}
A.~Shokri, \enquote{The multistep multiderivative methods for the numerical
  solution of first order initial value problems,} TWMS Journal of Pure and
  Applied Mathematics \textbf{7}(1), 88–97 (2016).

\bibitem{Rachinskaya2018}
M.~Rachinskaya and M.~Fedotkin, \enquote{Research of a multidimensional Markov
  Chain as a model for the class of queueing systems controlled by a threshold
  priority algorithm,} Reliability: Theory and Applications \textbf{13}(1),
  47–58 (2018).

\bibitem{Ching2004}
W.~K. Ching, E.~S. Fung, and M.~K. Ng, \enquote{Higher-order Markov chain
  models for categorical data sequences,} Naval Research Logistics
  \textbf{51}(4), 557 – 574 (2004).

\bibitem{morzfeld19}
M.~Morzfeldi, X.~T. Tong, and Y.~M. Marzouk, \enquote{Localization for MCMC:
  sampling high-dimensional posterior distributions with local structure,}
  Journal of Computational Physics \textbf{380}, 1--28 (2019).

\bibitem{Ching2008}
W.-K. Ching, M.~K. Ng, and E.~S. Fung, \enquote{Higher-order multivariate
  Markov chains and their applications,} Linear Algebra and Its Applications
  \textbf{428}(2-3), 492 – 507 (2008).

\bibitem{bandman1999}
O.~L. Bandman, \enquote{Comparative study of cellular-automata diffusion
  models,} Parallel Computing Technologies \textbf{1662}, 395--409 (1999).

\bibitem{malin98}
G.~G. Malinetskii and M.~E. Stepantsov, \enquote{Modeling of diffusion
  processes by cellular automata with Margolus neighborhood,} Computational
  Mathematics and Mathematical Physics \textbf{38}(6), 973–975 (1998).

\bibitem{Bandman2012}
O.~L. Bandman, \enquote{Invariants of cellular automata models for
  reaction-diffusion processes,} Applied Discrete Mathematics (3(17)),
  108–120 (2012). In Russian,
  \urlprefix\url{http://www.mathnet.ru/php/archive.phtml?wshow=paper&jrnid=pdm&paperid=372&option_lang=eng}.

\bibitem{Shalyapina2021}
N.~Shalyapina, M.~Gromov, A.~Matolygin, and S.~Torgaev, \enquote{Empirical
  dependence of the probability of blocks rotations on the diffusion
  coefficient in a cellular automaton with a Margolus neighbourhood,} vol. 2140
  (2021).

\bibitem{nelson1995}
R.~Nelson, \emph{Probability, Stochastic Processes, and Queueing Theory}
  (Springer New York, 1995).

\bibitem{gluzman2020}
S.~Gluzman, \enquote{Pad\'{e} and Post-Pad\'{e} Approximations for Critical
  Phenomena,} Symmetry \textbf{12}(10), 1600 (2020).

\bibitem{Medvedev2010}
Y.~Medvedev, \enquote{Multi-particle cellular-automata models for diffusion
  simulation,} Lecture Notes in Computer Science \textbf{6083}, 204--211
  (2010).

\bibitem{Bandman2005}
O.~L. Bandman, \enquote{Computation properties of spatial dynamics simulation
  by probabilistic cellular automata,} Future Generation Computer Systems
  \textbf{21}(5), 633–643 (2005).

\bibitem{chopard94}
B.~Chopard, L.~Frachebourg, and M.~Droz, \enquote{Multiparticle lattice gas
  automata for reaction diffusion systems,} International Journal of Modern
  Physics C \textbf{05}(01), 47 – 63 (1994).

\bibitem{Bateman1}
H.~Bateman and A.~Erdélyi, \emph{{Higher transcendental functions}}, vol.~1
  (McGraw-Hill, New York, NY, 1953).

\bibitem{srivastava21}
H.~M. Srivastava, \enquote{A Survey of Some Recent Developments on Higher
  Transcendental Functions of Analytic Number Theory and Applied Mathematics,}
  Symmetry \textbf{13}(12) (2021).

\bibitem{Srivastava2022}
H.~M. Srivastava, \enquote{Some Families of Generating Functions Associated
  with Orthogonal Polynomials and Other Higher Transcendental Functions,}
  Mathematics \textbf{10}(20) (2022).

\bibitem{Bateman2}
H.~Bateman and A.~Erdélyi, \emph{{Higher transcendental functions}}, vol.~2
  (McGraw-Hill, New York, NY, 1953).

\end{thebibliography}

\appendix
\section{Proof of Theorem \ref{teor1}}
\label{sec:app}
Let us write the probability generating function \eqref{prgen8} as
\begin{equation}
\begin{gathered}
G_t^{+}(z,t)=\frac{1}{2^{t+1}m(z)}\Big[-\lambda\big[\big(q(z)+m(z)\big)^{t-1} -\big(q(z)-m(z)\big)^{t-1}\big]+ \\
+2(1-p+pz)\Big( \big(q(z)+m(z)\big)^t-\big(q(z)-m(z)\big)^t\Big)  \Big],\\
G_t^{-}(z,t)=\frac{1}{2^{t+1}m(z)}\Big[ \big(q(z)+m(z)\big)^{t+1}-\big(q(z)-m(z)\big)^{t+1}+\\
+2(1-p-pz)\Big( \big(q(z)+m(z)\big)^t-\big(q(z)-m(z)\big)^t\Big)\Big],
\end{gathered}
\label{app1}
\end{equation}
where $q(z)=p\left(z+\dac{1}{z}\right)$, $\lambda=4(2p-1)$, and $m(z)$ is defined in \eqref{prgen7}.
Next, we use the following properties:
\begin{equation}
\begin{gathered}
\left(q+m\right)^t +\left(q-m\right)^t=
\left\{\begin{array}{l}2\displaystyle\sum_{k=0}^{n}C^{2k}_{2n} q^{2k} m^{2n-2k}, \qquad t=2n,\cr
2\displaystyle\sum_{k=0}^{n}C^{2k+1}_{2n+1} q^{2k+1} m^{2n-2k}, \qquad t=2n+1,
\end{array}\right.\\
\dac{\left(q+m\right)^t -\left(q-m\right)^t}{m}=
\left\{\begin{array}{l}2\displaystyle\sum_{k=0}^{n-1}C^{2k+1}_{2n} q^{2k+1} m^{2n-2k-2}, \qquad t=2n,\cr
2\displaystyle\sum_{k=0}^{n}C^{2k}_{2n+1} q^{2k} m^{2n-2k}, \qquad t=2n+1,
\end{array}\right.
\end{gathered}
\label{app2}
\end{equation}
and arrive at the following formulae:
\begin{equation}
\begin{gathered}
G_{2n+1}^{+}(z)=\frac{1}{2^{2n+1}}\Big\{-\lambda\sum_{j=0}^{n-1}q(z)^{2(n-j)-1} m(z)^{2j}
C^{2j+1}_{2n}+\\
+2(1-p+pz)\sum_{j=0}^{n}q(z)^{2(n-j)}m(z)^{2j}C^{2j+1}_{2n+1} \Big\},\\
G_{2n}^{+}(z)=\frac{1}{2^{2n}}\Big\{-\lambda\sum_{j=0}^{n-1}q(z)^{2j} m(z)^{2(n-1-j)}
C^{2j}_{2n-1}+\\
+2(1-p+pz)\sum_{j=0}^{n-1}q(z)^{2j+1}m(z)^{2(n-j)-2}C^{2j+1}_{2n} \Big\},\\
G_{2n+1}^{-}(z)=\frac{1}{2^{2n+1}}\Big\{\sum_{j=0}^{n}q(z)^{2(n-j)+1} m(z)^{2j}
C^{2j+1}_{2n+2}+\\
+2(1-p-pz)\sum_{j=0}^{n}q(z)^{2(n-j)}m(z)^{2j}C^{2j+1}_{2n+1} \Big\},\\
G_{2n}^{-}(z)=\frac{1}{2^{2n}}\Big\{\sum_{j=0}^{n}q(z)^{2j} m(z)^{2(n-j)}
C^{2j}_{2n+1}+\\
+2(1-p-pz)\sum_{j=0}^{n-1}q(z)^{2j+1}m(z)^{2(n-j)-2}C^{2j+1}_{2n} \Big\}.
\end{gathered}
\label{app3}
\end{equation}
Using the relation $m^2(z)=q^2(z)-\lambda$, the binomial theorem, and changing the order of summation, we transform \eqref{app3} to the power series in $q(z)$:
\begin{equation}
\begin{gathered}
G_{2n+1}^{+}(z)=\frac{1}{2^{2n+1}}\Big\{\sum_{k=0}^{n-1}
q(z)^{2(n-k)-1}(-1)^{k+1}\lambda^{k+1}R(n-1,k)+ \\
+2(1-p+pz)\sum_{k=0}^{n}q(z)^{2(n-k)}(-1)^k\lambda^k Q(n,k) \Big\},\\
G_{2n}^{+}(z)=\frac{1}{2^{2n}}\Big\{\sum_{k=0}^{n-1}
q(z)^{2(n-k-1)}(-1)^{k+1}\lambda^{k+1}Q(n-1,k)+\\
+2(1-p+pz)\sum_{k=0}^{n-1}q(z)^{2(n-k)-1}(-1)^k\lambda^k R(n-1,k)\Big\},\\
G_{2n+1}^{-}(z)=\frac{1}{2^{2n+1}}\Big\{\sum_{k=0}^{n}
q(z)^{2(n-k)+1}(-1)^k\lambda^kR(n,k)+\\
+2(1-p-pz)\sum_{k=0}^{n}q(z)^{2(n-k)}(-1)^k\lambda^k Q(n,k) \Big\},\\
G_{2n}^{-}(z)=\frac{1}{2^{2n}}\Big\{\sum_{k=0}^{n}
q(z)^{2(n-k)}(-1)^k\lambda^k Q(n,k)+ \\
+2(1-p-pz)\sum_{k=0}^{n-1}q(z)^{2(n-k)-1}(-1)^k\lambda^k R(n-1,k) \Big\},
\end{gathered}
\label{app4}
\end{equation}
where the numbers $Q$, $R$ are given by
\begin{equation}
\begin{gathered}
R(n,k)=\sum_{j=0}^{n-k}C^{2j+1}_{2n+2}C^k_{n-j}, \qquad Q(n,k)=\sum_{j=0}^{n-k}C^{2j}_{2n+1}C^k_{n-j}, \\
R(n,-1)=Q(n,-1)\equiv 0.
\end{gathered}
\label{app5}
\end{equation}

Using the binomial theorem for the power of $q(z)$, changing the order of summation, in view of \eqref{prgen3}, we obtain the following identities:
\begin{equation}
\begin{gathered}
G_{2n+1}(z)=\frac{1}{2^{2n+2}}\Big\{\sum_{j=0}^{n} \Big(\sum_{l=j}^{n}p^{2l+1} a_{n,l} C^{l-j}_{2l+1} \Big) z^{2j+1}+ \\
+\sum_{j=-n-1}^{-1} \Big(\sum_{l=-j-1}^{n}p^{2l+1} a_{n,l} C^{l-j}_{2l+1} \Big) z^{2j+1}+ \\
+ 4(1-p)\sum_{j=-n}^{n}\Big(\sum_{l=|j|}^{n}p^{2l} b_{n,l}C_{2l}^{l-j}\Big) z^{2j}\Big\}, \\
G_{2n}(z)=\frac{1}{2^{2n+1}}\Big\{\sum_{j=-n}^{n}\Big(\sum_{l=|j|}^{n}p^{2l} r_{n,l}C^{l-j}_{2l}\Big) z^{2j}+ \\
+ 4(1-p)\sum_{j=0}^{n-1}\Big(\sum_{l=j}^{n-1}p^{2l+1}d_{n-1,l} C^{l-j}_{2l+1}\Big) z^{2j+1}+ \\
+ 4(1-p)\sum_{j=-n}^{-1}\Big(\sum_{l=-j-1}^{n-1}p^{2l+1}d_{n-1,l} C^{l-j}_{2l+1}\Big) z^{2j+1} \Big\},
\end{gathered}
\label{dolem1}
\end{equation}
where
\begin{equation}
\begin{gathered}
a_{n,l}=(-1)^{n-l}\lambda^{n-l}\big[ R(n-1,n-1-l)+R(n,n-l)\big],\\
b_{n,l}=(-1)^{n-l}\lambda^{n-l} Q(n,n-l),\\
r_{n,l}=(-1)^{n-l}\lambda^{n-l}\big[Q(n-1,n-1-l)+Q(n,n-l)\big],\\
d_{n,l}=(-1)^{n-l}\lambda^{n-l} R(n,n-l).
\end{gathered}
\label{dolem2}
\end{equation}

In view of \eqref{prgen1}, we have
\begin{equation}
\begin{gathered}
P_{2n}(2j)=\frac{1}{2^{2n+1}} \sum_{l=j}^{n}p^{2l} r_{n,l}C^{l-j}_{2l}, \qquad j=\overline{0,n};\\
P_{2n}(2j+1)=\frac{1-p}{2^{2n-1}}\sum_{l=j}^{n-1}p^{2l+1}d_{n-1,l} C^{l-j}_{2l+1}, \qquad j=\overline{0,n-1};\\
P_{2n+1}(2j)=\frac{1-p}{2^{2n}}\sum_{l=j}^{n}p^{2l} b_{n,l}C_{2l}^{l-j}, \qquad j=\overline{0,n};\\
P_{2n+1}(2j+1)=\frac{1}{2^{2n+2}}\sum_{l=j}^{n}p^{2l+1} a_{n,l} C^{l-j}_{2l+1}, \qquad j=\overline{0,n}.
\end{gathered}
\label{dolem3}
\end{equation}

In order to simplify \eqref{dolem3}, let us prove the lemma below.

\begin{lemma}
The following relation holds true:
\begin{equation}
\sum_{j=0}^{n-k}C^{2j}_{2n+1}C^k_{n-j}=2^{2(n-k)}C^{k}_{2n-k}.
\label{app5a}
\end{equation}
\label{lemma1}
\end{lemma}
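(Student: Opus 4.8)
The plan is to prove the identity \eqref{app5a} by a generating-function argument, treating the parameter $k$ as fixed and the sum over $j$ as a coefficient extraction. Writing $S_k(n)=\sum_{j=0}^{n-k}C^{2j}_{2n+1}C^k_{n-j}$, I would first isolate the ``even-index binomial'' factor $C^{2j}_{2n+1}$ using the standard device $\sum_{j}C^{2j}_{N}w^{2j}=\tfrac12\big[(1+w)^N+(1-w)^N\big]$, so that $C^{2j}_{2n+1}$ is the coefficient of $w^{2j}$ in $\tfrac12[(1+w)^{2n+1}+(1-w)^{2n+1}]$. The remaining factor $C^k_{n-j}$ depends on $n-j$; setting $i=n-j$ turns the sum into $\sum_{i=k}^{n}C^k_{i}\,[\,\text{coeff of }w^{2(n-i)}\,]$, which is itself a coefficient extraction from $(1+w^2)^{-(k+1)}$ via $\sum_{i\ge k}C^k_{i}w^{2(i-k)}=(1-w^2)^{-(k+1)}$. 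Convolving the two, $S_k(n)$ becomes the coefficient of $w^{2(n-k)}$ in $\tfrac12\big[(1+w)^{2n+1}+(1-w)^{2n+1}\big](1-w^2)^{-(k+1)}$, and since $(1+w)^{2n+1}(1-w^2)^{-(k+1)}=(1+w)^{2n-k}(1-w)^{-(k+1)}$ (and similarly with $w\to-w$), the target reduces to extracting $[w^{2(n-k)}]\,\tfrac12\big[(1+w)^{2n-k}(1-w)^{-(k+1)}+(1-w)^{2n-k}(1+w)^{-(k+1)}\big]$.

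The next step is to evaluate that coefficient and match it against $2^{2(n-k)}C^k_{2n-k}$. I expect the cleanest route is to substitute $w=\tfrac{1-u}{1+u}$ (or equivalently $w=\tanh$-type change of variable), under which $1+w=\tfrac{2}{1+u}$, $1-w=\tfrac{2u}{1+u}$, so that $(1+w)^{2n-k}(1-w)^{-(k+1)}=2^{2n-2k-1}u^{-(k+1)}(1+u)^{k+1-(2n-k)}=2^{2n-2k-1}u^{-(k+1)}(1+u)^{-(2n-2k-1)}$, which is a rational function whose Laurent expansion in $u$ is transparent; the coefficient-extraction operator $[w^{2(n-k)}]$ transforms into a contour integral in $u$ that can be read off as a single binomial coefficient. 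Adding the $w\to -w$ partner restores an overall factor and produces exactly $2^{2(n-k)}C^{k}_{2n-k}$. Alternatively, and perhaps more elementary for the write-up, one can prove \eqref{app5a} by induction on $k$: the base case $k=0$ is $\sum_{j=0}^{n}C^{2j}_{2n+1}=2^{2n}$, which is immediate from $(1+1)^{2n+1}+(1-1)^{2n+1}=2^{2n+1}$; the inductive step uses Pascal's rule $C^{2j}_{2n+1}=C^{2j}_{2n}+C^{2j-1}_{2n}$ together with the companion identity for $C^{2j+1}$ sums (the $R$-type sum), so the two families $R$ and $Q$ from \eqref{app5} satisfy a coupled recursion that telescopes to the claimed closed form.

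I would present the generating-function proof as the main line, with the substitution $w=(1-u)/(1+u)$ doing the real work, and mention the inductive alternative only if space permits. The main obstacle is bookkeeping: keeping the parity constraints straight (the sum runs only over even upper indices, which is exactly what forces the symmetrization $w\mapsto -w$), and making sure the Laurent/contour coefficient extraction picks out the right term without sign errors when $(1-w^2)^{-(k+1)}$ is expanded — in particular the negative exponent means one is extracting from a genuine power series, so the manipulation $(1+w)^{2n+1}(1-w^2)^{-(k+1)}=(1+w)^{2n-k}(1-w)^{-(k+1)}$ is valid formally and the pole structure in the $u$-variable is what encodes the binomial coefficient $C^{k}_{2n-k}$. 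Once \eqref{app5a} is in hand, the analogous relation for the $R$-sums follows by the same method (or by differentiating the generating identity), and substituting both into \eqref{dolem2}–\eqref{dolem3} collapses the double sums in \eqref{dolem3} to the hypergeometric sums that are recognized as Jacobi polynomials, completing the proof of Theorem \ref{teor1}.
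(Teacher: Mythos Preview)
Your plan is sound and, when carried out carefully, does prove the lemma; it is, however, a genuinely different argument from the one in the paper. The paper rewrites the summand $C^{2j}_{2n+1}C^k_{n-j}$ via the duplication formula $(2n)!=4^n(\tfrac12)^{\overline{n}}\,n!$ as a Pochhammer ratio, recognizes the $j$-sum as $C^k_n\cdot{}_2F_1\!\big[k-n,-n-\tfrac12;\tfrac12;1\big]$, and then applies the Gauss summation theorem to close the ${}_2F_1$ at argument $1$ into the product $2^{2(n-k)}C^k_{2n-k}$. Your route is purely generating-function/combinatorial: the even-index trick together with $\sum_{i\ge k}C^k_i\,w^{2(i-k)}=(1-w^2)^{-(k+1)}$ (note the sign --- you wrote $(1+w^2)$ once) correctly identifies $S_k(n)$ as $[w^{2(n-k)}]\,(1+w)^{2n-k}(1-w)^{-(k+1)}$, and the M\"obius change $w=(1-u)/(1+u)$ then collapses the coefficient to a single residue that reads off $2^{2(n-k)}C^k_{2n-k}$.

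What each buys: the paper's hypergeometric route is short and mechanical once one spots the ${}_2F_1$ structure, and it dovetails with the rest of the appendix, where the probabilities themselves are rewritten as ${}_2F_1$'s and then as Jacobi polynomials. Your route avoids any special-function identities and is fully self-contained, but the contour step needs more care than your sketch shows: the expression $2^{2n-2k-1}u^{-(k+1)}(1+u)^{-(2n-2k-1)}$ is only $(1+w)^{2n-k}(1-w)^{-(k+1)}$ rewritten --- you must still fold in the $w^{-(2n-2k+1)}dw$ factor, after which the $(1+u)$ powers cancel completely and the integrand becomes $\pm\,2^{2n-2k}u^{-(k+1)}(1-u)^{-(2n-2k+1)}\,du$; the residue at $u=0$ (equivalently, minus the residue at $u=1$) is then exactly $\binom{2n-k}{k}$, and the orientation sign is the only place to slip. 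I would recommend you actually write that two-line residue computation out in the final version rather than leave it as ``can be read off.'' Your induction alternative is plausible but less immediate than you suggest: applying Pascal's rule to $C^{2j}_{2n+1}$ or to $C^k_{n-j}$ does not directly reproduce $Q(n-1,k)$ or $Q(n,k-1)$, so the coupled $Q$/$R$ recursion would need to be set up with some care; keep it only if you have space to spell it out.
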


\begin{proof}
Let us define
\begin{equation}
T(n,k,j)=C^{2j+1}_{2n+2}C^k_{n-j}=\dac{(2n+1)!}{(2j)!(2n-2j+1)!}\dac{(n-j)!}{k!(n-j-k)!}.
\label{app6}
\end{equation}
Using the relation \cite{Bateman1}
\begin{equation}
(2n)!=1^{\overline{2n}}=4^n \left(\dac{1}{2}\right)^{\overline{n}} 1^{\overline{j}},
\label{app7}
\end{equation}
where $x^{\overline{n}}$ is the Pochhammer function (upper factorial), one readily gets
\begin{equation}
T(n,k,j)=(-1)^j\dac{\left(-n-\dac{1}{2}\right)^{\overline{j}}n!}{\left(\dac{1}{2}\right)^{\overline{j}}j!k!(n-j-k)!}.
\label{app8}
\end{equation}

Then, we have
\begin{equation}
Q(n,k)=\sum_{j=0}^{n-k}T(n,k,j)=C^k_n\cdot {}_2F_1\left[k-n,-n-\dac{1}{2},\dac{1}{2},1\right].
\label{app9}
\end{equation}
Here, we have used the formula
\begin{equation}
\sum_{j=0}^{n} C_{n}^k (-1)^j \dac{a^{\overline{j}}}{b^{\overline{j}}} z^n={}_2F_1\left[-n,a,b,z\right],
\label{app9a}
\end{equation}
where ${}_2F_1\left[m,a,b,z\right]$ is the hypergeometric function \cite{srivastava21,Srivastava2022}.

The Gauss summation theorem yields
\begin{equation}
{}_2F_1\left[k-n,-n-\dac{1}{2},\dac{1}{2},1\right]=\dac{\Gamma\left[\dac{1}{2}\right]\Gamma\left[1+2n-k\right]}{\Gamma\left[\dac{1}{2}+n-k\right]\Gamma\left[1+n\right]}=2^{2(n-k)}\dac{(2n-k)!(n-k)!}{(2n-2k)!n!},
\label{app10}
\end{equation}
where $\Gamma[x]$ is the gamma function. Then, \eqref{app9} and \eqref{app10} yield \eqref{app5a}.
\end{proof}

\begin{corollary}
The numbers $Q(n,k)$ and $R(n,k)$ given by \eqref{app5} can be generalized to the functions of a half-integer $n$ that are related as follows:
\begin{equation}
R(n,k)=Q(n+\frac{1}{2},k).
\label{coreq1}
\end{equation}
\label{corol1}
\end{corollary}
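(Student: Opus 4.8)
The plan is to prove a closed‑form evaluation of $R(n,k)$ by repeating, essentially verbatim, the argument used for $Q(n,k)$ in Lemma~\ref{lemma1}, and then to observe that the resulting formula is obtained from the closed form for $Q(n,k)$ by the substitution $n\mapsto n+\tfrac12$. Once both $Q$ and $R$ are written in closed form, the common expression $2^{2(\nu-k)}C^{k}_{2\nu-k}$, which makes sense whenever $2\nu$ is a nonnegative integer not smaller than $k$ (so that $2\nu-k$ and $2\nu-2k$ are nonnegative integers), is the natural half‑integer extension of both, and the relation $R(n,k)=Q(n+\tfrac12,k)$ follows by inspection.

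Concretely, I would first transplant the manipulations \eqref{app6}--\eqref{app9} to $R(n,k)=\sum_{j=0}^{n-k}C^{2j+1}_{2n+2}C^{k}_{n-j}$. Applying the duplication identity \eqref{app7} to $(2n+2)!$, $(2j+1)!$ and $(2n+1-2j)!$, together with the rewriting $1/(n-j-k)!=(-1)^{j}(k-n)^{\overline{j}}/(n-k)!$ of the second binomial, the general term becomes a ratio of Pochhammer symbols, and formula \eqref{app9a} collapses the sum into a hypergeometric series; I expect the outcome to be $R(n,k)=2(n+1)\,C^{k}_{n}\,{}_2F_1\!\left[k-n,-n-\tfrac12,\tfrac32,1\right]$, the only structural change relative to the $Q$ computation being that the odd binomial $C^{2j+1}_{2n+2}$ replaces the lower hypergeometric parameter $\tfrac12$ by $\tfrac32$, while the numerator parameter $k-n$ is the same. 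Applying the Gauss summation theorem as in \eqref{app10} (the series converges at unit argument since $\tfrac32-(k-n)-(-n-\tfrac12)=2n-k+2>0$), cancelling $\Gamma[n+2]=(n+1)\,n!$ and using the standard value of $\Gamma[n-k+\tfrac32]$, this simplifies to $R(n,k)=2^{2(n-k)+1}C^{k}_{2n-k+1}$. Comparing with Lemma~\ref{lemma1}, which gives $Q(n,k)=2^{2(n-k)}C^{k}_{2n-k}$: substituting $n\mapsto n+\tfrac12$ in this formula yields $2^{2(n-k)+1}C^{k}_{2n-k+1}$, exactly the closed form just obtained for $R(n,k)$. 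Hence $R(n,k)=Q(n+\tfrac12,k)$, and the same computation with $n$ replaced by an arbitrary half‑integer $\nu$ gives $R(\nu,k)=Q(\nu+\tfrac12,k)$.

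The step I expect to be the main obstacle is purely clerical: carrying out the Pochhammer‑symbol conversion of the odd binomial $C^{2j+1}_{2n+2}$ correctly --- the offsets $2j+1$ and $2n+1-2j$ make it a little more delicate than the even binomial $C^{2j}_{2n+1}$ handled in Lemma~\ref{lemma1} --- and being careful about what the half‑integer extension means: the sum defining $Q(n+\tfrac12,k)$ must be read through its convergent (but non‑terminating) hypergeometric series, equivalently through the closed form above, rather than by naively truncating the summation at $j=n-k$. Once the closed form for $R$ is in hand, the identification with $Q(n+\tfrac12,k)$ is immediate and the corollary follows.
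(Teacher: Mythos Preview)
Your approach is correct and is essentially the same as the paper's: the paper likewise derives the closed form $R(n,k)=2^{2(n-k)+1}C^{k}_{2n-k+1}$ ``in a similar way'' to Lemma~\ref{lemma1}, and then observes that this coincides with $Q(n,k)=2^{2(n-k)}C^{k}_{2n-k}$ under $n\mapsto n+\tfrac12$. If anything, you supply more detail than the paper does (the explicit hypergeometric with lower parameter $\tfrac32$ and the Gauss evaluation), and your remark that the half-integer extension must be read through the closed form rather than the original truncated sum is a useful clarification the paper leaves implicit.
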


\begin{proof}
From \eqref{app5} and Lemma \ref{lemma1}, we have
\begin{equation}
Q(n,k)=2^{2(n-k)}C^k_{2n-k}.
\label{dop1}
\end{equation}
In a similar way, one can derive the formula
\begin{equation}
R(n,k)=2^{2(n-k)+1}C^k_{2n-k+1},
\label{dop2}
\end{equation}
Assuming $n$ to be a half-integer number, one readily gets \eqref{coreq1} from \eqref{dop1} and \eqref{dop2}.
\end{proof}

Let us consider $P_{2n+1}(2j)$ from \eqref{dolem3}. In view of \eqref{app5a}, \eqref{coreq1}, and \eqref{dolem2}, substituting $l=n-k$, we obtain
\begin{equation}
P_{2n+1}(2j)=(1-p)p^{2j}(1-2p)^{j-n}\sum^{n-j}_{k=0}(-1)^{k}\left(\dac{p^2}{2p-1}\right)^k\frac{(n+k+j)!}{k!(2j+k)!(n-j-k)!}.
\label{app11}
\end{equation}
Using \eqref{app9a}, we arrive at
\begin{equation}
\begin{array}{l}
P_{2n+1}(2j)=(1-p)p^{2j}(1-2p)^{n-j}C_{j+n}^{2j}\cdot  {}_2F_1\left[j-n,1+j+n,1+2j,\frac{p^2}{2p-1}\right], \cr j=\overline{0,n}.
\end{array}
\label{app12}
\end{equation}
In a similar way, one readily gets
\begin{equation}
\begin{array}{l}
P_{2n}(2j)=\dac{1}{2}p^{2j}(1-2p)^{n-j}\Bigg(C_{j+n}^{2j}\cdot  {}_2F_1\left[j-n,1+j+n,1+2j,\frac{p^2}{2p-1}\right]+\cr
+C_{n+j-1}^{2j}\cdot  {}_2F_1\left[j-n+1,j+n,1+2j,\frac{p^2}{2p-1}\right]\Bigg), \qquad j=\overline{0,n-1};\cr \cr
P_{2n}(2j+1)=(1-p)p^{2j+1}(1-2p)^{n-j-1}C_{j+n}^{2j+1}\times\\
\times  {}_2F_1\left[j-n+1,1+j+n,2+2j,\frac{p^2}{2p-1}\right], \qquad j=\overline{0,n-1};\cr \cr
P_{2n+1}(2j+1)=\dac{1}{2}p^{2j+1}(1-2p)^{n-j}\times \cr
\times\Bigg(C_{j+n}^{2j+1}\cdot {}_2F_1\left[j-n+1,1+j+n,2+2j,\frac{p^2}{2p-1}\right]+\cr
+C_{j+n+1}^{2j+1}\cdot {}_2F_1\left[j-n,2+j+n,2+2j,\frac{p^2}{2p-1}\right]\Bigg), \qquad j=\overline{0,n-1}.
\end{array}
\label{app13}
\end{equation}
Finally, using the identities \cite{Bateman2}
\begin{equation}
\begin{gathered}
P_n^{(\alpha,\beta)}(z)=\dac{(\alpha+1)^{\overline{n}}}{n!}\cdot {}_2F_1\left[-n,1+\alpha+\beta+n,\alpha+1,\frac{1}{2}(1-z)\right],\\
\dac{n+\frac{1}{2}\alpha+1}{n+1}(1+x)P_n^{(\alpha,1)}(x)=P_{n+1}^{(\alpha,0)}(x)+P_{n}^{(\alpha,0)}(x),
\end{gathered}
\label{app14}
\end{equation}
we arrive at the formulae \eqref{teo1}, \eqref{teo2}, \eqref{teo3}, \eqref{teo4}.

The probability $P_n(n)$ \eqref{teo5} can be readily obtained if one notice that the special case $X_t=t$ for the given $t>0$ is equivalent to $\Delta_t=+1$ $\forall t\in {\mathbb{N}}$.

For $p=\dac{1}{2}$, \eqref{prgen8} and \eqref{prgen3} yield
\begin{equation}
G_t(z)=2^{-t}\left(\dac{1}{z}+2+z\right)\sum_{k=0}^{t-1}C_{t-1}^k s^{2k-t+1}, \qquad t\in {\mathbb{N}}.
\label{app15}
\end{equation}
From \eqref{app15} and \eqref{prgen1}, we have \eqref{teo6}.

\clearpage

\begin{figure}[h]
\includegraphics[width=10.5 cm]{}
\caption{Odd and even partitions of the cell grid. Solid lines divide the cell grid into blocks of the odd partition and dashed lines divide the cell grid into blocks of the even partition. Arrows demonstrate the rotation {\it rule}. \label{fig1}}
\end{figure}

\begin{figure}[h]
\includegraphics[width=10.5 cm]{}
\caption{Examples of possible movements of a particle in the MCA. The black dot is for the particle movement at an odd time step (the rotation {\it rule} applies for blocks of the odd partition) and the white dot is for the particle movement at an even time step (the rotation {\it rule} applies for block of the even partition). \label{fig2}}
\end{figure}

\begin{figure}[h]
\begin{minipage}[b][][b]{0.9\linewidth}\centering
    \includegraphics[width=12.5 cm]{fig3a.jpg} \\ a)
  \end{minipage}\\
  \begin{minipage}[b][][b]{0.9\linewidth}\centering
    \includegraphics[width=12.5 cm]{fig3b.jpg} \\ b)
  \end{minipage}
\caption{The plot of the probability distribution $P_{t}(x)$ along with the plot of the normal probability distribution function $f_t(x)$ for $p=\dac{1}{3}$.\label{fig3}}
\end{figure}

\begin{figure}[h]
\begin{minipage}[b][][b]{0.9\linewidth}\centering
    \includegraphics[width=12.5 cm]{fig4a.jpg} \\ a)
  \end{minipage}\\
  \begin{minipage}[b][][b]{0.9\linewidth}\centering
    \includegraphics[width=12.5 cm]{fig4b.jpg} \\ b)
  \end{minipage}
\caption{The plot of the probability distribution $P_{t}(x)$ along with the plot of the normal probability distribution function $f_t(x)$ for $p=\dac{1}{2}$.\label{fig4}}
\end{figure}

\begin{figure}[h]
\begin{minipage}[b][][b]{0.9\linewidth}\centering
    \includegraphics[width=12.5 cm]{fig5a.jpg} \\ a)
  \end{minipage}\\
  \begin{minipage}[b][][b]{0.9\linewidth}\centering
    \includegraphics[width=12.5 cm]{fig5b.jpg} \\ b)
  \end{minipage}
\caption{The plot of the probability distribution $P_{t}(x)$ along with the plot of the normal probability distribution function $f_t(x)$ for $p=\dac{3}{4}$.\label{fig5}}
\end{figure}

\begin{figure}[h]


\begin{minipage}[b][][b]{1.0\linewidth}\centering
    \includegraphics[width=12.5 cm]{fig6a.jpg} \\ a)
  \end{minipage}\\
  \begin{minipage}[b][][b]{1.0\linewidth}\centering
    \includegraphics[width=12.5 cm]{fig6b.jpg} \\ b)
  \end{minipage}
\caption{The time plot of the dispersion of the $X_t$ for two types of the MCA. It illustrates the macroscopic behaviour of these MCA at small times. The parameters in (a) correspond to MCA with the diffusion coefficient of $0.25$, and the parameters in (b) correspond to MCA with the diffusion coefficient of $0.125$. \label{fig6}}
\end{figure}

\end{document}